\newcommand{\prox}{\mathop{\rm prox}\nolimits}
\newcommand{\sign}{\mathop{\rm sign}\nolimits}
\newcommand{\minimize}{\operatorname{minimize}}  
\renewcommand{\Re}{{\rm{I\!R}} }
\newcommand{\Rinf}{\overline{\rm I\!R} }
\DeclareMathOperator*{\argmin}{\arg\!\min}
\newcommand{\stt}{\rm subject\ to}
\newcommand{\ie}{\emph{i.e.},~}
\definecolor{red}{rgb}{1,0,0}
\definecolor{yellow}{rgb}{1,1,0}
\definecolor{blue}{rgb}{0,0,1}
\newtheorem{theorem}{Theorem}
\newtheorem{lemma}{Lemma}
\newtheorem{proposition}{Proposition}
\title{\LARGE \bf Douglas-Rachford Splitting: Complexity Estimates and Accelerated Variants}
\author{Panagiotis Patrinos and Lorenzo Stella and Alberto Bemporad%
\thanks{The authors are with IMT Institute for Advanced Studies Lucca,
Piazza S. Ponziano 6, 55100 Lucca, Italy. Email: {\tt\small \{panagiotis.patrinos, lorenzo.stella, alberto.bemporad\}@imtlucca.it}}
\thanks{This work was partially supported by the European Research Project FP7-EFFINET (Grant no. 318556).}
}
\begin{document}

\maketitle
\thispagestyle{empty}
\pagestyle{empty}

\begin{abstract}
We propose a new approach for analyzing convergence of the Douglas-Rachford
splitting method for solving convex composite optimization problems. The approach is
based on a continuously differentiable function, the \emph{Douglas-Rachford
Envelope (DRE)}, whose stationary points correspond to the solutions of the
original (possibly nonsmooth) problem. By proving the equivalence between the
Douglas-Rachford splitting method and a scaled gradient method applied to the DRE, results from
smooth unconstrained optimization are employed to analyze convergence properties
of DRS, to tune the method and to derive an accelerated version of it.
\end{abstract}

\section{Introduction}\label{sec:Introduction}
In this paper we consider convex optimization problems of the form
\begin{equation}
{\minimize}\ F(x) = f(x)+g(x),
\label{eq:CompProb}
\end{equation}
where $f:\Re^n\to\Rinf$ and $g:\Re^n\to\Rinf$ are proper closed convex functions
with easily computable \emph{proximal mappings}~\cite{rockafellar1976monotone}. 
We recall that for a convex function $h:\Re^n\to\Rinf$ 
and positive scalar $\gamma$, the proximal mapping is defined as
\begin{equation}
\prox_{\gamma h}(x) = \argmin_z\left\{h(z)+\tfrac{1}{2\gamma}\|z-x\|^2\right\}.
\label{eq:ProxMap}
\end{equation}

A well known algorithm  for solving~\eqref{eq:CompProb} is 
the Douglas-Rachford splitting (DRS)  method~\cite{lions1979splitting}. 
In fact,  DRS can be applied to solve the more general problem of finding 
the zero of two maximal monotone operators. In the special case where the 
corresponding operators are the subdifferentials of $f$ and $g$, DRS amounts
to the following iterations 
\begin{subequations}\label{eq:DRS}
\begin{align}
y^k &= \prox_{\gamma f}(x^k),\label{eq:DRSfirst}\\
z^k &= \prox_{\gamma g}(2y^k-x^k),\\
x^{k+1} &= x^k + \lambda_k(z^k-y^k),\label{eq:DRSlast}
\end{align}  
\end{subequations}
where $\gamma > 0$ and the stepsizes $\lambda_k\in[0,2]$ satisfy
$\sum_{k\in\mathbb{N}}\lambda_k(2-\lambda_k)=+\infty$. A typical choice for $\lambda_k$ is to be set equal to $1$ for
all $k$. If the minimum in~\eqref{eq:CompProb} is attained and
the relative interiors of the effective domains of $f$ and $g$ have a point in common,
then it is well known that
$\{z^k-y^k\}$ converges to $0$, and $\{x^k\}$ converges to $x$ such that
$\prox_{\gamma f}(x)\in\argmin F$~\cite{eckstein1989splitting, eckstein1992douglasrachford,bauschke2011convex}. 
Therefore $\{y^k\}$ and $\{z^k\}$ converge to a
solution of~\eqref{eq:CompProb}. This general form of DRS was proposed by~\cite{eckstein1989splitting, eckstein1992douglasrachford}, where it was
shown that DRS is a particular case of the proximal point algorithm~\cite{rockafellar1976monotone}. 
Thus DRS converges under very general assumptions. For example, unlike forward-backward splitting (FBS)~\cite{combettes2011proximal},
it does not require differentiability of one of the two summands and parameter $\gamma$ can take any positive value.

Another well-known application of DRS is for solving problems of the form
\begin{align}\label{eq:CompProbCons}
{\minimize}&\ f(x)+g(z),\\
\stt&\ Ax+Bz=b.\nonumber
\end{align}
Applying DRS to the dual of problem~\eqref{eq:CompProbCons} leads to the
alternating direction method of multipliers (ADMM)~\cite{gabay1983applications,eckstein1989splitting, eckstein1992douglasrachford}. This method has
recently received a lot of attention, especially because of its properties with
respect to separable objective functions, that make it favorable for large-scale
problems and distributed applications~\cite{boyd2011distributed,parikh2013proximal}.
 
However, when applied to~\eqref{eq:CompProb}, the behavior of DRS is quite different compared to standard
optimization methods. For example, unlike FBS, DRS is not a descent method, in that the sequence of cost values $\{F(x^k)\}$ may not be monotone decreasing. This is perhaps one of the main reasons why the convergence rate of DRS has not been well understood and convergence rate results were scarce, until very recently. The first convergence result for DRS appeared
in~\cite{lions1979splitting}. Translated to the setting of solving~\eqref{eq:CompProb}, under strong convexity and Lipschitz continuity
assumptions for $f$, the sequence $\{x^k\}$ was shown to converge $Q$-linearly to the (unique)
optimal solution of~\eqref{eq:CompProb}. More recently, it was shown that if $f$ is differentiable then the squared residual $\|x^k-\prox_{\gamma g}(x^k-\gamma\nabla f(x^k))\|^2$ converges to zero with sublinear rate of $1/k$~\cite{he2012o1n}. In~\cite{goldstein2012fast} convergence rates of order $1/k$ for the objective values  are provided implicitly for DRS under the assumption that both $f$ and $g$ have Lipschitz continuous gradients. Under the additional assumption that $f$ is quadratic, the authors of~\cite{goldstein2012fast} give an accelerated version with convergence rate $1/k^2$.  In~\cite{deng2012global} the authors show global linear convergence for ADMM under a variety of scenarios. Translated in the DRS setting, they require at least $f$ to be strongly convex with Lipschitz continuous gradient. In~\cite{hong2012linear}  $R$-linear convergence of the duality gap and primal cost for multiple splitting ADMM under less stringent assumptions is shown, provided that the stepsizes $\lambda_k$ are sufficiently small. However, the form of the convergence rate is not very informative, since the bound on the stepsizes depends on constants that are very hard to compute. In~\cite{ghadimi2013optimal} it is shown that ADMM converges linearly for quadratic programs with the constraint matrix being full rank. However explicit complexity estimates are only provided for the (infrequent) case where the constraint matrix is full row rank. Convergence rates of DRS and ADMM are analyzed under various assumptions in the recent paper \cite{davis2014convergence}.

\subsection{Our contribution}
In this paper we follow a new approach to the analysis of the convergence
properties and complexity estimates of DRS.
We show that when $f$ is twice continuously differentiable, then problem~\eqref{eq:CompProb}
is equivalent to computing a stationary point of a continuously differentiable function, the
\emph{Douglas-Rachford Envelope (DRE)}. Specifically, DRS is shown to be nothing more than a (scaled)
gradient method applied to the DRE. This kind of interpretation is similar to the one offered by the
Moreau envelope for the proximal point algorithm and paves the way for deriving new algorithms based
on the Douglas-Rachford splitting approach.

A similar idea has been exploited
in~\cite{patrinos2013proximal,patrinos2014forwardbackward}
in order to express another splitting method, the forward-backward splitting,
as a gradient method applied to the so-called Forward-Backward Envelope (FBE). 
There the purpose was use the FBE as a merit function on which to perform
Newton-like methods with superlinear local convergence rates to solve non
differentiable problems. Here the purpose is instead to analyze the convergence rate
properties of Douglas-Rachford splitting by expressing it as a gradient
method. Specifically, we show that if $f$ is convex quadratic (but $g$ can still
be any convex nonsmooth function) then the DRE
is convex with Lipschitz continuous gradient, provided that $\gamma$ is sufficiently small. 
This covers a wide variety of problems such as quadratic programs, $\ell_1$ least squares, 
nuclear norm regularized least squares, image restoration/denoising problems involving total variation
minimization norm, etc.
This observation makes convergence rate analysis of DRS extremely easy, since it allows us 
to directly apply the well known complexity estimates of the gradient method. 
Furthermore, we discuss the optimal choice of the parameter
$\gamma$ and of the stepsize $\lambda_k$ defining the method, and devise a method with faster convergence
rates 
by exploiting
the acceleration techniques introduced by Nesterov~\cite{nesterov1983method},\cite[Sec. 2.2]{nesterov2003introductory}.






The paper is structured as follows.
In Section~\ref{sec:DRE} we define the Douglas-Rachford envelope and analyze its
properties, illustrating how DRS is equivalent to a scaled gradient
method applied to the DRE. Section~\ref{sec:Quadratic} discusses the convergence
of Douglas-Rachford splitting in the particular but important case in which
$f$ is convex quadratic, where the DRE turns out to be convex.
Section~\ref{sec:Algorithms} considers the application of accelerated gradient
methods to the DRE to achieve faster convergence rates. Finally,
Section~\ref{sec:Simulations} shows experimental results obtained with the proposed methods.

\section{Douglas-Rachford Envelope}\label{sec:DRE}
 We indicate by $X_\star$ the set of optimal solutions to problem~\eqref{eq:CompProb}, which we assume to be nonempty. Then $x_\star\in X_\star$ if and only if~\cite[Cor. 26.3]{bauschke2011convex} $x_\star=\prox_{\gamma f}(\tilde x)$, where $\tilde{x}$ is a solution of 
\begin{align}
\prox_{\gamma g}(2\prox_{\gamma f}(x)-x)-\prox_{\gamma f}(x)=0.\label{eq:OptCond2}
\end{align}
Let $\tilde X$ be the set of solutions to~\eqref{eq:OptCond2}.
Our goal is to find a continuously differentiable function whose set of
stationary points is equal to $\tilde X$.

Given a function $h:\Re^n\to\Rinf$, consider its \emph{Moreau envelope}
$$h^\gamma(x)=\inf_z\left\{h(z)+\tfrac{1}{2\gamma}\|z-x\|^2\right\}.$$
It is well known that $h^\gamma:\Re^n\to\Re$ is differentiable (even if $h$ is nonsmooth) with $(1/\gamma)$-Lipschitz continuous
gradient
\begin{equation}\label{eq:MoreauGrad}
\nabla h^\gamma(x)= \gamma^{-1}(x-\prox_{\gamma h}(x)).
\end{equation}
By using~\eqref{eq:MoreauGrad} we can rewrite~\eqref{eq:OptCond2} as
\begin{equation}\label{eq:OptCond3}
\nabla f^{\gamma}(x)+\nabla g^\gamma(x-2\gamma\nabla f^{\gamma}(x))=0.
\end{equation}
From now on we make the extra assumption that $f$ is twice continuously differentiable, with
$L_f$-Lipschitz continuous gradient. We also assume that $f$ has strong convexity modulus equal to $\mu_f\geq 0$, i.e., function $f(x)-\tfrac{\mu_f}{2}\|x\|^2$ is convex.
Notice that we allow $\mu_f$ to be equal to zero, including also the case where $f$ is not strongly convex.
Due to these assumptions we have
\begin{equation}\label{eq:bndHess}
\|\nabla^2 f(x)\| \leq L_f,\ \mbox{for all }x\in\Re^n.
\end{equation}
Moreover, from \cite[Prop. 4.1, Th. 4.7]{lemarechal1997practical} the Jacobian
of $\prox_{\gamma f}$ and the Hessian of $f^\gamma$ exist everywhere and are related
to each other as follows:
\begin{align}
\nabla\prox_{\gamma f}(x)&=(I+\gamma\nabla^2f(\prox_{\gamma f}(x)))^{-1},\\
\nabla^2 f^\gamma(x)&=\gamma^{-1}(I-\nabla\prox_{\gamma f}(x)).\label{eq:MoreauHess}
\end{align}
Using \eqref{eq:bndHess}-\eqref{eq:MoreauHess} one can easily show that for any $d\in\Re^n$
\begin{equation}\label{eq:MoreauEig}
\tfrac{\mu_f}{1+\gamma\mu_f}\|d\|^2\leq d'\nabla^2 f^\gamma(x)d\leq\tfrac{L_f}{1+\gamma L_f}\|d\|^2.
\end{equation}
In other words, if $f$ is twice continuously differentiable with $L_f$-Lipschitz continuous gradient then the eigenvalues of the Hessian of its Moreau envelope are bounded uniformly for every $x\in\Re^n$.

Next, we premultiply~\eqref{eq:OptCond3} by $(I-2\gamma\nabla^2f^\gamma(x))$ to obtain the gradient
of what we call the \emph{Douglas-Rachford Envelope (DRE)}:
\begin{equation}\label{eq:DRE1}
F^{\mathrm{DR}}_\gamma(x) = f^{\gamma}(x)-\gamma\|\nabla f^\gamma(x)\|^2+g^\gamma(x-2\gamma\nabla f^{\gamma}(x)).
\end{equation}
If $(I-2\gamma\nabla^2f^\gamma(x))$ is nonsingular for every $x$,
then every stationary point of $F^{\mathrm{DR}}_\gamma$ is also an element of
$\tilde X$, and vice versa. From~\eqref{eq:MoreauEig} we obtain
\begin{equation}\label{eq:EigH}
\tfrac{1-\gamma L_f}{1+\gamma L_f}\|d\|^2\leq d'(I-2\gamma\nabla^2f^\gamma(x))d\leq\tfrac{1-\gamma \mu_f}{1+\gamma \mu_f}\|d\|^2.
\end{equation}
Therefore whenever $\gamma<1/L_f$ or $\gamma>1/\mu_f$
(in case where $\mu_f>0$), finding a stationary point of the DRE~\eqref{eq:DRE1}
is equivalent to solving~\eqref{eq:OptCond2}.

It is convenient now to introduce the following notation:
\begin{align*}
P_\gamma(x)&=\prox_{\gamma f}(x),\\
G_\gamma(x)&=\prox_{\gamma g}(2P_\gamma(x)-x),\\
Z_\gamma(x)&=P_\gamma(x)-G_\gamma(x),
\end{align*}
so that condition~\eqref{eq:OptCond2} is expressed as $Z_\gamma(x)=0$.
By~\eqref{eq:MoreauHess} we can rewrite $ I-2\gamma\nabla^2f^\gamma(x) = 2\nabla P_\gamma(x) - I$, therefore 
the gradient of the DRE can be expressed as
\begin{equation}\label{eq:GradDRE}
\nabla F^{\mathrm{DR}}_\gamma(x) = \gamma^{-1}(2\nabla P_\gamma(x)-I)Z_\gamma(x).
\end{equation}


The following proposition is instrumental in establishing an equivalence between problem~\eqref{eq:CompProb} and
that of minimizing the DRE.
\begin{proposition}\label{prop:BasicInequalities}
The following inequalities 
hold for any $\gamma>0$ and $x\in\Re^n$:
\begin{subequations}
\begin{align}
F^{\mathrm{DR}}_\gamma(x)&\leq F(P_\gamma(x))-\tfrac{1}{2\gamma}\|Z_\gamma(x)\|^2,\label{eq:Ineq1}\\
F^{\mathrm{DR}}_\gamma(x)&\geq F(G_\gamma(x))+\tfrac{1-\gamma L_f}{2\gamma}\|Z_\gamma(x)\|^2.\label{eq:Ineq2}
\end{align}
\end{subequations}
\end{proposition}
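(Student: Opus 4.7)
The plan is to first rewrite $F^{\mathrm{DR}}_\gamma(x)$ in a form that makes both inequalities transparent, and then obtain the upper bound from subdifferential inequality for $g$ at $G_\gamma(x)$, and the lower bound from the descent lemma for $f$ between $P_\gamma(x)$ and $G_\gamma(x)$.

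First, I would substitute the Moreau-decomposition identities into the definition of the DRE. Using $\nabla f^\gamma(x)=\gamma^{-1}(x-P_\gamma(x))$ gives $x-2\gamma\nabla f^\gamma(x)=2P_\gamma(x)-x$, so
\begin{align*}
f^\gamma(x)-\gamma\|\nabla f^\gamma(x)\|^2 &= f(P_\gamma(x))-\tfrac{1}{2\gamma}\|x-P_\gamma(x)\|^2,\\
g^\gamma(2P_\gamma(x)-x)&=g(G_\gamma(x))+\tfrac{1}{2\gamma}\|G_\gamma(x)-(2P_\gamma(x)-x)\|^2.
\end{align*}
Expanding $G_\gamma(x)-(2P_\gamma(x)-x)=(x-P_\gamma(x))-Z_\gamma(x)$ and collecting terms, the Moreau quadratics cancel and one obtains
\begin{equation*}
F^{\mathrm{DR}}_\gamma(x)=f(P_\gamma(x))+g(G_\gamma(x))+\langle\nabla f(P_\gamma(x)),G_\gamma(x)-P_\gamma(x)\rangle+\tfrac{1}{2\gamma}\|Z_\gamma(x)\|^2,
\end{equation*}
where I have used the proximal optimality condition $\nabla f(P_\gamma(x))=\gamma^{-1}(x-P_\gamma(x))$.

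For inequality \eqref{eq:Ineq1}, I would use that the definition of $G_\gamma(x)$ as a proximal mapping provides the inclusion $\gamma^{-1}(2P_\gamma(x)-x-G_\gamma(x))\in\partial g(G_\gamma(x))$. The subgradient inequality applied between $G_\gamma(x)$ and $P_\gamma(x)$ gives a lower bound on $g(P_\gamma(x))-g(G_\gamma(x))$ in terms of an inner product that, after substituting $x-P_\gamma(x)=\gamma\nabla f(P_\gamma(x))$ and regrouping, is precisely $-\langle\nabla f(P_\gamma(x)),G_\gamma(x)-P_\gamma(x)\rangle+\gamma^{-1}\|Z_\gamma(x)\|^2$. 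Plugging this into the rewrite above yields the claimed upper bound $F^{\mathrm{DR}}_\gamma(x)\le F(P_\gamma(x))-\tfrac{1}{2\gamma}\|Z_\gamma(x)\|^2$.

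For inequality \eqref{eq:Ineq2}, I would invoke the standard descent lemma for the $L_f$-Lipschitz gradient function $f$ at the two points $P_\gamma(x)$ and $G_\gamma(x)$, yielding $f(G_\gamma(x))\le f(P_\gamma(x))+\langle\nabla f(P_\gamma(x)),G_\gamma(x)-P_\gamma(x)\rangle+\tfrac{L_f}{2}\|Z_\gamma(x)\|^2$. Rearranging this and substituting into the rewrite gives precisely $F^{\mathrm{DR}}_\gamma(x)\ge F(G_\gamma(x))+\tfrac{1-\gamma L_f}{2\gamma}\|Z_\gamma(x)\|^2$. The only real obstacle is the bookkeeping in the initial rewrite; once $F^{\mathrm{DR}}_\gamma(x)$ is put in the form above, the two bounds are essentially a one-line application of convexity of $g$ and the descent lemma for $f$, respectively, which is why the constant $L_f$ appears only in the lower bound.
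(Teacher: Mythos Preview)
Your argument is correct and mirrors the paper's proof: both derive the identity $F^{\mathrm{DR}}_\gamma(x)=f(P_\gamma(x))+g(G_\gamma(x))+\langle\nabla f(P_\gamma(x)),G_\gamma(x)-P_\gamma(x)\rangle+\tfrac{1}{2\gamma}\|Z_\gamma(x)\|^2$ (this is the paper's~\eqref{eq:DRE4}), then obtain~\eqref{eq:Ineq1} from the optimality condition of $\prox_{\gamma g}$ (the paper packages this as the three-point inequality of Lemma~\ref{le:BasicLemma}, which is equivalent to your subgradient step) and~\eqref{eq:Ineq2} from the descent lemma for $f$. One bookkeeping slip: the subgradient lower bound on $g(P_\gamma(x))-g(G_\gamma(x))$ works out to $+\langle\nabla f(P_\gamma(x)),G_\gamma(x)-P_\gamma(x)\rangle+\gamma^{-1}\|Z_\gamma(x)\|^2$ (plus, not minus), and with that sign the substitution indeed gives~\eqref{eq:Ineq1}.
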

\begin{proof}
See Appendix.
\end{proof}

The following fundamental result shows, under the assumption
of $\gamma$ being sufficiently small, that minimizing the DRE, which is real-valued and smooth,
is completely equivalent to solving the nonsmooth problem~\eqref{eq:CompProb}.
Furthermore, the set of stationary points of the DRE, which may not be convex,
coincide with the set of its minimizers.
\begin{theorem}\label{th:Equiv}
If $\gamma\in(0,1/L_f)$ then
\begin{align*}
\inf\ F    &= \inf\ F^{\mathrm{DR}}_\gamma,\\
\argmin\ F &= P_\gamma(\argmin\ F^{\mathrm{DR}}_\gamma).
\end{align*}
\end{theorem}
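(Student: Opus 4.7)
The plan is to chain together the two inequalities from \autoref{prop:BasicInequalities} with the optimality characterization~\eqref{eq:OptCond2} given at the start of the section. The crucial observation is that the hypothesis $\gamma\in(0,1/L_f)$ makes the coefficient $(1-\gamma L_f)/(2\gamma)$ in~\eqref{eq:Ineq2} strictly positive, so the term $\|Z_\gamma(x)\|^2$ in that inequality behaves as a genuine penalty: whenever the lower bound is attained, $Z_\gamma$ must vanish. This single fact is what drives every conclusion of the theorem.

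For the equality $\inf F=\inf F^{\mathrm{DR}}_\gamma$, I would proceed in two steps. First, to get $\inf F^{\mathrm{DR}}_\gamma\geq\inf F$, apply~\eqref{eq:Ineq2} at an arbitrary $x$: since $\tfrac{1-\gamma L_f}{2\gamma}\|Z_\gamma(x)\|^2\geq 0$ and $F(G_\gamma(x))\geq \inf F$, we obtain $F^{\mathrm{DR}}_\gamma(x)\geq\inf F$ uniformly in $x$. Second, to get the reverse direction, pick any $x_\star\in X_\star$; by the characterization recalled before~\eqref{eq:OptCond2}, there exists $\tilde x\in\tilde X$ with $x_\star=P_\gamma(\tilde x)$, and such a $\tilde x$ satisfies $Z_\gamma(\tilde x)=0$. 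Applying~\eqref{eq:Ineq1} at $\tilde x$ yields $F^{\mathrm{DR}}_\gamma(\tilde x)\leq F(P_\gamma(\tilde x))=F(x_\star)=\inf F$, which completes the equality and in fact shows $\tilde x\in\arg\min F^{\mathrm{DR}}_\gamma$.

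For the set identity $\arg\min F=P_\gamma(\arg\min F^{\mathrm{DR}}_\gamma)$, I would argue two inclusions. The inclusion $\supseteq$ is the key step: given $\tilde x\in\arg\min F^{\mathrm{DR}}_\gamma$, we have $F^{\mathrm{DR}}_\gamma(\tilde x)=\inf F$, so~\eqref{eq:Ineq2} forces both $F(G_\gamma(\tilde x))=\inf F$ and $\tfrac{1-\gamma L_f}{2\gamma}\|Z_\gamma(\tilde x)\|^2=0$; the strict positivity of the coefficient (here the hypothesis $\gamma<1/L_f$ is essential) then gives $Z_\gamma(\tilde x)=0$, whence $P_\gamma(\tilde x)=G_\gamma(\tilde x)\in\arg\min F$. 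The inclusion $\subseteq$ repeats the argument of the previous paragraph: for $x_\star\in\arg\min F$, choose $\tilde x\in\tilde X$ with $P_\gamma(\tilde x)=x_\star$; the preceding step has already shown $\tilde x\in\arg\min F^{\mathrm{DR}}_\gamma$, so $x_\star\in P_\gamma(\arg\min F^{\mathrm{DR}}_\gamma)$.

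Given \autoref{prop:BasicInequalities} and the optimality characterization, this reduces to bookkeeping; the only subtle point is making sure the strict inequality $\gamma<1/L_f$ is used exactly where needed, namely to convert the penalty term in~\eqref{eq:Ineq2} into a hard constraint $Z_\gamma(\tilde x)=0$ at minimizers of the DRE. The genuinely substantial work is hidden in \autoref{prop:BasicInequalities} itself and in the identity $P_\gamma=G_\gamma$ at a zero of $Z_\gamma$; everything else is a short deduction.
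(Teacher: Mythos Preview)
Your proposal is correct and follows essentially the same approach as the paper's proof: both chain \eqref{eq:Ineq1} and \eqref{eq:Ineq2} with the characterization $X_\star=P_\gamma(\tilde X)$, and both use the strict positivity of $(1-\gamma L_f)/(2\gamma)$ to force $Z_\gamma=0$ at minimizers of the DRE. The paper's version is slightly more compact---it establishes $\argmin F^{\mathrm{DR}}_\gamma=\tilde X$ in one stroke via a contrapositive and then reads off the set identity---whereas you spell out the two inclusions separately, but the substance is identical.
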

\begin{proof}
By~\cite[Cor. 26.3]{bauschke2011convex} we know that $x_\star\in X_\star$ if and
only if $x_\star = P_\gamma(\tilde x)$, for some $\tilde x\in\tilde{X}$, \ie
with $P_\gamma(\tilde x)=G_\gamma(\tilde x)$.
Putting $x=\tilde{x}$ in~\eqref{eq:Ineq1},~\eqref{eq:Ineq2} one obtains
$$ F^{\mathrm{DR}}_\gamma(\tilde x) = F(x_\star).$$
When $\gamma < 1/L_f$, Eq.~\eqref{eq:Ineq2} implies that for all $x\in\Re^n$
\begin{equation}\label{eq:IneqFgamma}
F^{\mathrm{DR}}_\gamma(x) \geq F(G_\gamma(x)) \geq F(x_\star)=F^{\mathrm{DR}}_\gamma(\tilde x),
\end{equation}
where the last inequality follows from optimality of $x_\star$. Therefore
the elements of $\tilde{X}$ are minimizers of $F^{\mathrm{DR}}_\gamma$ and
$\inf F = \inf F^{\mathrm{DR}}_\gamma$.
They are indeed the only minimizers, for if $x\notin\tilde{X}$ 
then $Z_\gamma(x) \neq 0$ in \eqref{eq:Ineq2}, and the first inequality in~\eqref{eq:IneqFgamma} is strict.
\end{proof}
\subsection{DRS as a variable-metric gradient method}
In simple words, Theorem~\ref{th:Equiv} tells us that under suitable assumptions on $\gamma$, one can employ whichever
smooth unconstrained optimization technique for minimizing the DRE and thus solve~\eqref{eq:CompProb}.
The resulting algorithm will of course bear a close relationship to DRS since the gradient of the DRE, cf.~\eqref{eq:GradDRE}, is inherently related to a step of DRS, cf.~\eqref{eq:DRS}. 

In particular, from the expression~\eqref{eq:GradDRE} for $\nabla F^{\mathrm{DR}}_\gamma$,
one observes that Douglas-Rachford splitting can be interpreted as a variable-metric
gradient method for minimizing $F^{\mathrm{DR}}_\gamma$. Specifically, we have
that the $x$-iterates defined by~\eqref{eq:DRS} correspond to
\begin{equation}
x^{k+1}=x^k-\lambda_kD^k\nabla F^{\mathrm{DR}}_\gamma(x^k),\label{eq:ScaledGrad}
\end{equation}
where
\begin{equation}
D^k=\gamma(2\nabla P_\gamma(x^k)-I)^{-1}.\label{eq:ScalingMat}
\end{equation}
We can then exploit all the well known convergence results of gradient
methods to analyze the properties of DRS or propose alternative schemes of it. 
\subsection{Connection between DRS and FBS}
The DRE reveals an interesting link between Douglas-Rachford splitting and forward-backward splitting, that has remained unnoticed at least to our knowledge.
Let us  first derive an alternative way of expressing the DRE. Since $P_\gamma(x)=\argmin_z\{f(z)+\tfrac{1}{2}\|z-x\|^2\}$ satisfies
\begin{equation}\label{eq:optP}
\nabla f(P_\gamma(x))+\gamma^{-1}(P_\gamma(x)-x)=0,
\end{equation}
the gradient of the Moreau envelope of $f$ becomes
\begin{align}
\nabla f^{\gamma}(x)= \gamma^{-1}(x-P_\gamma(x))=\nabla f(P_\gamma(x)).\label{eq:MoreauGrad2}
\end{align}
Using~\eqref{eq:optP},~\eqref{eq:MoreauGrad2} in~\eqref{eq:DRE1} we obtain the following alternative expression for the DRE
\begin{equation}
F^{\mathrm{DR}}_\gamma{=}f(P_\gamma(x))-\tfrac{\gamma}{2}\|\nabla f(P_\gamma(x))\|^2+g^\gamma(2P_\gamma(x)-x),\label{eq:DRE2}
\end{equation}
Next, using the definition of $g^\gamma$ in~\eqref{eq:DRE2}, it is possible to express
\begin{align}
F^{\mathrm{DR}}_\gamma(x) &= \min_{z\in\Re^n}\{f(P_\gamma(x))+\nabla f(P_\gamma(x))'(z-P_\gamma(x))\nonumber\\
    &\phantom{= \min_{z\in\Re^n}\{}+g(z)+\tfrac{1}{2\gamma}\|z-P_\gamma(x)\|^2\}.\label{eq:DRE3}
\end{align}
Comparing this with the definition of the forward-backward envelope (FBE)
introduced in~\cite{patrinos2013proximal}
$$F^{\mathrm{FB}}_\gamma(x)= \min_{z\in\Re^n}\{f(x)+\nabla f(x)'(z-x)+g(z)+\tfrac{1}{2\gamma}\|z-x\|^2\},$$
it is apparent that the DRE at $x$ is equal to the FBE evaluated at $P_\gamma(x)$:
$$ F^{\mathrm{DR}}_\gamma(x) = F^{\mathrm{FB}}_\gamma(P_\gamma(x)).$$
Let us recall here that iterates $x^{k+1}$ of FBS are obtained by solving the optimization problem appearing in the definition
of FBE for $x=x^k$.
Therefore, it can be easily seen that an iteration of DRS corresponds to a forward-backward step applied to $\prox_{\gamma f}(x^k)$ (instead of $x^k$, as in FBS).

\section{Douglas-Rachford Splitting}\label{sec:Quadratic}
In case $f$ is convex quadratic, \ie
$$ f(x) = \tfrac{1}{2}x'Qx + q'x, $$
with $Q\in\Re^{n\times n}$ symmetric and positive semidefinite
and $q\in\Re^n$, we have
\begin{align}
P_\gamma(x) &= (I+\gamma Q)^{-1}(x-\gamma q),\label{eq:Pquad}\\
\nabla P_\gamma(x)&= (I+\gamma Q)^{-1}.\label{eq:gradPquad}
\end{align}


We now have $\mu_f = \lambda_{\min}(Q)$ and $L_f = \lambda_{\max}(Q)$.
It turns out that in this case, under the
already mentioned assumption $\gamma<1/L_f$, the DRE
is convex.
\begin{theorem}\label{th:QuadConvex}
Suppose that $f$ is convex quadratic.
If $\gamma < 1/L_f$, then $F^{\mathrm{DR}}_\gamma$ is convex with $L_{F^{\mathrm{DR}}_\gamma}$-Lipschitz continuous gradient and convexity modulus $\mu_{F^{\mathrm{DR}}_\gamma}$ given by
\begin{align}
L_{F^{\mathrm{DR}}_\gamma}&=\frac{1-\gamma \mu_f}{1+\gamma \mu_f}\gamma^{-1}\label{eq:LDRE},\\
\mu_{F^{\mathrm{DR}}_\gamma}&=\min\left\{\frac{(1-\gamma\mu_f)\mu_f}{(1+\gamma\mu_f)^2},\frac{(1-\gamma L_f)L_f}{(1+\gamma L_f)^2}\right\}.\label{eq:mDRE}
\end{align}
\end{theorem}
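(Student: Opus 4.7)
The plan is to compute $\nabla^2 F^{\mathrm{DR}}_\gamma$ explicitly and bound its spectrum. In the quadratic case $\nabla P_\gamma \equiv H := (I+\gamma Q)^{-1}$ is a constant symmetric matrix, and the assumption $\gamma < 1/L_f$ makes $N := 2H - I = H(I - \gamma Q)$ symmetric positive definite. Differentiating the expression~\eqref{eq:GradDRE} once more and applying the chain rule to $Z_\gamma(x) = P_\gamma(x) - \prox_{\gamma g}(2P_\gamma(x) - x)$ yields
\[
\gamma \nabla^2 F^{\mathrm{DR}}_\gamma(x) = N(H - MN) = NH - NMN,
\]
where $M$ is the Jacobian of $\prox_{\gamma g}$ at $2P_\gamma(x) - x$, symmetric with $0 \preceq M \preceq I$ by firm nonexpansiveness of proxes of convex functions. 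Symmetry of the Hessian is automatic because $H$ and $N$ are polynomials in $Q$ and hence commute.

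The key step is to pinch the Hessian using the bounds on $M$. Conjugating by $N \succ 0$ preserves the Loewner order, so $0 \preceq NMN \preceq N^2$ and
\[
NH - N^2 \;\preceq\; \gamma \nabla^2 F^{\mathrm{DR}}_\gamma(x) \;\preceq\; NH.
\]
Both $NH$ and $NH - N^2 = N(I - H)$ (using $H - N = I - H$) are polynomials in $Q$, hence simultaneously diagonalizable with it. In the eigenbasis of $Q$, their spectra are $\{\tfrac{1-\gamma\lambda}{(1+\gamma\lambda)^2}\}$ and $\{\tfrac{\gamma\lambda(1-\gamma\lambda)}{(1+\gamma\lambda)^2}\}$ respectively, parametrised by eigenvalues $\lambda \in [\mu_f, L_f]$ of $Q$.

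All that remains is a short one-variable optimisation on these rational functions. For the upper bound, a derivative check shows $\lambda \mapsto \tfrac{1-\gamma\lambda}{(1+\gamma\lambda)^2}$ is decreasing on $[0, 1/\gamma]$, so its maximum over $[\mu_f, L_f]$ is $\tfrac{1-\gamma\mu_f}{(1+\gamma\mu_f)^2}$; relaxing via $(1+\gamma\mu_f) \geq 1$ recovers the stated $\gamma L_{F^{\mathrm{DR}}_\gamma}$ in~\eqref{eq:LDRE}. For the lower bound, $\phi(\lambda) := \tfrac{\lambda(1-\gamma\lambda)}{(1+\gamma\lambda)^2}$ has $\phi'(\lambda)$ proportional to $1 - 3\gamma\lambda$, so $\phi$ is unimodal on $[0, 1/\gamma]$ and attains its minimum on $[\mu_f, L_f]$ at one of the two endpoints, giving exactly the $\min$ in~\eqref{eq:mDRE}. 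Convexity is then a corollary, since this minimum is nonnegative.

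The step I expect to require the most care is the passage from $0 \preceq M \preceq I$ to the Loewner sandwich $0 \preceq NMN \preceq N^2$: the matrix $M$ does \emph{not} commute with $N$ in general, and it is precisely the positive definiteness of $N$ — which is where the hypothesis $\gamma < 1/L_f$ enters — that guarantees the symmetric conjugation $X \mapsto NXN$ preserves the order. A secondary technicality is that $\prox_{\gamma g}$ is only differentiable almost everywhere, but since $Z_\gamma$ is globally Lipschitz this suffices to propagate the above eigenvalue bounds into uniform Lipschitz and strong convexity constants for $\nabla F^{\mathrm{DR}}_\gamma$.
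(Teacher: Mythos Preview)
Your proof is correct and takes a genuinely different route from the paper's. The paper treats the two bounds by separate, more structural arguments. For the Lipschitz constant it never forms the Hessian: it bounds $\|\nabla F^{\mathrm{DR}}_\gamma(x_1)-\nabla F^{\mathrm{DR}}_\gamma(x_2)\|$ directly from~\eqref{eq:GradDRE} as $\gamma^{-1}\|N\|\,\|Z_\gamma(x_1)-Z_\gamma(x_2)\|$ and then invokes nonexpansiveness of $Z_\gamma$, proved via the reflected-resolvent factorisation $Z_\gamma=\tfrac12(I-R_{\gamma\partial g}\circ R_{\gamma\partial f})$. For the convexity modulus it uses the decomposition~\eqref{eq:DRE2}: the term $f(P_\gamma(x))-\tfrac{\gamma}{2}\|\nabla f(P_\gamma(x))\|^2$ is quadratic with Hessian $(I+\gamma Q)^{-1}(I-\gamma Q)Q(I+\gamma Q)^{-1}$ (eigenvalues exactly your $\phi(\lambda)$), while $g^\gamma(2P_\gamma(x)-x)$ is convex as a Moreau envelope composed with an affine map, so the strong-convexity modulus of the sum is at least that of the quadratic part. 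Your unified Hessian sandwich via $0\preceq M\preceq I$ is arguably cleaner and, as a bonus, gives the sharper Lipschitz constant $\gamma^{-1}\tfrac{1-\gamma\mu_f}{(1+\gamma\mu_f)^2}$ before you relax it to match~\eqref{eq:LDRE}. What the paper's decomposition buys is precisely the technicality you flag at the end: it never differentiates $\prox_{\gamma g}$, so no almost-everywhere or generalized-Jacobian argument is needed.
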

\begin{proof}
Using~\eqref{eq:GradDRE},~\eqref{eq:gradPquad},~\eqref{eq:EigH}  and Lemma~\ref{le:NonexpZ} in the Appendix, we obtain
\begin{align*}
\|\nabla F^{\mathrm{DR}}_\gamma(x_1)-\nabla F^{\mathrm{DR}}_\gamma(x_2)\|&\leq \gamma^{-1}\|2(I+\gamma Q)^{-1}-I\|\\
&\phantom{\leq \gamma^{-1}}\cdot\|Z_\gamma(x_1)-Z_\gamma(x_2)\|\\
&\leq\left(\tfrac{2}{1+\gamma \mu_f}-1\right)\gamma^{-1}\|x_1-x_2\|.
\end{align*}

Next, due to the form of $P_\gamma$, cf.~\eqref{eq:Pquad} it is evident that
$f(P_\gamma(x))-\frac{\gamma}{2}\|\nabla f(P_\gamma(x))\|^2$ is
quadratic with Hessian
$$H=(I+\gamma Q)^{-1}(I-\gamma Q)Q (I+\gamma Q)^{-1}.$$
The eigenvalues of $H$ are given by $\tfrac{(1-\gamma\lambda_i)\lambda_i}{(1+\gamma\lambda_i)^2}$, where $\lambda_i$, $i=1,\ldots,n$ are the eigenvalues of $Q$.
Consider the function
$$\psi(\lambda)=\tfrac{(1-\gamma\lambda)\lambda}{(1+\gamma\lambda)^2}.$$
If $\gamma < 1/L_f$, $\psi$ is concave and its minimum is
attained in one of the two endpoints of the interval $[\mu_f,L_f]$. 
The minimum eigenvalue of $f(P_\gamma(x))-\frac{\gamma}{2}\|\nabla f(P_\gamma(x))\|^2$
is then given by~\eqref{eq:mDRE}.
On the other hand, $g^\gamma(x-2\gamma\nabla f^{\gamma}(x))$ is convex as the
composition of the convex function $g^\gamma$ with an affine map. Therefore, the DRE as expressed by~\eqref{eq:DRE2},
is the sum of two functions, one of them being (strongly) convex with modulus $\mu_{F^{\mathrm{DR}}_\gamma}$ and the other convex. Hence it is (strongly) convex with modulus $\mu_{F^{\mathrm{DR}}_\gamma}$.
\end{proof}

Therefore, under the assumptions of Theorem~\ref{th:QuadConvex}, we can exploit the
well known results on the convergence of the gradient method for convex problems.
To do so, note that when $f$ is quadratic, $P_\gamma$ is linear and
the scaling matrix $D^k$ defined in~\eqref{eq:ScalingMat} is constant, \ie
$$D^k \equiv D = \gamma(2(I+\gamma Q)^{-1}-I)^{-1}.$$
Consider the linear change of variables $x = Sw$, where $S = D^{1/2}$. Note that
\begin{equation}\label{eq:eigenD}
\lambda_{\mathrm{min}}(D) = \gamma\frac{1+\gamma \mu_f}{1-\gamma \mu_f},
\quad\lambda_{\mathrm{max}}(D) = \gamma\frac{1+\gamma L_f}{1-\gamma L_f},
\end{equation}
so if $\gamma < 1/L_f \leq 1/\mu_f$ then
matrix $D$ is positive definite and $S$ is well defined.

In the new variable $w$, the scaled gradient iterations~\eqref{eq:ScaledGrad}
correspond to the (unscaled) gradient method applied to the preconditioned
problem
$$ \minimize\ h(w) = F^{\mathrm{DR}}_\gamma(Sw). $$
Indeed, the gradient method applied on $h$ is
\begin{equation}
w^{k+1} = w^k-\lambda_k\nabla h(w^k)\label{eq:GMh}
\end{equation}
Multiplying by $S$ and using $\nabla h(w^k)=S\nabla F^{\mathrm{DR}}_\gamma(Sw^k)$, 
we obtain
$$ x^{k+1} = x^k-\lambda_k D\nabla F^{\mathrm{DR}}_\gamma(x^k).$$
Recalling~\eqref{eq:GradDRE}, this becomes
$$ x^{k+1} = x^k-\lambda_k Z_\gamma(x^k),$$
which is exactly DRS, cf.~\eqref{eq:DRS}.
From now on we will indicate by $\tilde w$ a minimizer of $h$, so that
$\tilde w = S\tilde x$ for some $\tilde x\in\tilde X$.
From Theorem~\ref{th:QuadConvex} we know that if $\gamma < 1/L_f$ then $F_{\gamma}^{\mathrm{DR}}$
is convex with Lipschitz continuous gradient, and so is $h$. In particular,
\begin{align}
\mu_h &= \lambda_{\min}(D)\mu_{F_{\gamma}^{\mathrm{DR}}},\label{eq:mh}\\
L_h &= \lambda_{\max}(D)L_{F_{\gamma}^{\mathrm{DR}}} = \frac{1+\gamma L_f}{1-\gamma L_f}.
\end{align}

\begin{theorem}\label{th:QuadConvergence1}
For convex quadratic $f$, if $\gamma < 1/L_f$ and
\begin{equation}\label{eq:lambdass}
\lambda_k = \lambda = (1-\gamma L_f)/(1+\gamma L_f)
\end{equation}
then the sequence of iterates generated by~\eqref{eq:DRSfirst}-\eqref{eq:DRSlast} satisfies
$$ F(z^{k+1})-F_\star \leq \frac{1}{(2\gamma\lambda) k}\|x^0-\tilde x\|^2. $$
\end{theorem}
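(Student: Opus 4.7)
The plan is to reduce the claim to the standard $O(1/k)$ convergence rate of gradient descent applied to a smooth convex function, using the machinery already set up in Section~\ref{sec:Quadratic} that identifies DRS with the (unscaled) gradient method on $h(w)=F^{\mathrm{DR}}_\gamma(Sw)$.

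First, I would observe that by Theorem~\ref{th:QuadConvex} together with~\eqref{eq:mh} and the subsequent displayed formula, $h$ is convex with Lipschitz-continuous gradient having constant $L_h=(1+\gamma L_f)/(1-\gamma L_f)=1/\lambda$. Thus the constant stepsize $\lambda_k=\lambda$ prescribed by~\eqref{eq:lambdass} is exactly the canonical stepsize $1/L_h$ for gradient descent on $h$. Invoking the textbook sublinear convergence estimate for gradient descent on smooth convex functions yields
\begin{equation*}
h(w^{k+1})-h_\star \leq \frac{L_h}{2(k+1)}\|w^0-\tilde w\|^2=\frac{1}{2\lambda(k+1)}\|w^0-\tilde w\|^2.
\end{equation*}

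Next, I would translate this bound back to the $x$-variables. Since $w=S^{-1}x$ with $S=D^{1/2}$, we have $\|w^0-\tilde w\|^2=(x^0-\tilde x)'D^{-1}(x^0-\tilde x)\leq \lambda_{\min}(D)^{-1}\|x^0-\tilde x\|^2$. From~\eqref{eq:eigenD} and $\mu_f\geq 0$, one gets $\lambda_{\min}(D)=\gamma(1+\gamma\mu_f)/(1-\gamma\mu_f)\geq\gamma$, hence $\|w^0-\tilde w\|^2\leq \gamma^{-1}\|x^0-\tilde x\|^2$. Also, by construction $h(w^{k+1})=F^{\mathrm{DR}}_\gamma(x^{k+1})$ and by Theorem~\ref{th:Equiv} the optimal value of $h$ equals $F_\star$.

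Finally, I would link $F^{\mathrm{DR}}_\gamma(x^{k+1})$ to $F(z^{k+1})$ via Proposition~\ref{prop:BasicInequalities}. Observing that $z^{k+1}=G_\gamma(x^{k+1})$ by the very definition~\eqref{eq:DRS}, inequality~\eqref{eq:Ineq2} gives $F(z^{k+1})\leq F^{\mathrm{DR}}_\gamma(x^{k+1})-\tfrac{1-\gamma L_f}{2\gamma}\|Z_\gamma(x^{k+1})\|^2 \leq F^{\mathrm{DR}}_\gamma(x^{k+1})$, where the last step uses $\gamma<1/L_f$. Combining the three displayed inequalities,
\begin{equation*}
F(z^{k+1})-F_\star \leq F^{\mathrm{DR}}_\gamma(x^{k+1})-F^{\mathrm{DR}}_{\gamma,\star}=h(w^{k+1})-h_\star\leq \frac{1}{2\gamma\lambda(k+1)}\|x^0-\tilde x\|^2,
\end{equation*}
which implies the stated bound (in fact with $k+1$ in place of $k$).

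I do not expect a serious obstacle: the nontrivial work—establishing convexity and the Lipschitz constant of $h$, and the change of variables relating DRS to gradient descent—has already been carried out before the theorem is stated. The only mildly delicate points are remembering to use~\eqref{eq:Ineq2} evaluated at $x^{k+1}$ (since the theorem bounds $F(z^{k+1})$, not $F(y^{k+1})$), and extracting the factor $\gamma^{-1}$ from $\lambda_{\min}(D)$ so that the transferred quadratic distance in the $x$-metric produces the clean constant $1/(2\gamma\lambda)$ appearing in the statement.
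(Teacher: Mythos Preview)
Your proposal is correct and follows essentially the same route as the paper: invoke the $O(1/k)$ rate for gradient descent on $h$ with stepsize $1/L_h=\lambda$, pass from the $w$-norm to the $x$-norm via $\lambda_{\min}(D)\geq\gamma$, and then use~\eqref{eq:Ineq2} together with $z^{k+1}=G_\gamma(x^{k+1})$ and Theorem~\ref{th:Equiv} to replace $F^{\mathrm{DR}}_\gamma$ by $F$. The only cosmetic difference is the index shift (the paper bounds $h(w^k)$ by $L_h/(2k)$ and then cites $z^k=G_\gamma(x^k)$), which you correctly note yields the slightly sharper constant $1/(2\gamma\lambda(k+1))$.
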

\begin{proof}
Douglas-Rachford splitting~\eqref{eq:DRS} corresponds to the gradient descent
iterations \eqref{eq:GMh}. 
So by setting $\lambda = 1/L_h$ one has:
$$ h(w^{k})-h(\tilde{w})\leq\frac{L_h}{2 k}\|w^0-\tilde w\|^2, $$
see for example \cite[Prop. 6.10.2]{bertsekas2009online}.
Applying the substitution $x = Sw$, and considering that
\begin{equation}\label{eq:boundsD}
\lambda_{\max}^{-1}(D)\|x\|^2 \leq \|x\|_{D^{-1}}^2 \leq \lambda_{\min}^{-1}(D)\|x\|^2,\ \forall x\in\Re^n
\end{equation}
one obtains
\begin{align*}
F_{\gamma}^{\mathrm{DR}}(x^k)-F_{\gamma}^{\mathrm{DR}}(\tilde x)&\leq\frac{L_h}{2 k}\|x^0-\tilde x\|_{D^{-1}}^2\\
&\leq\frac{1}{2 k}\frac{1+\gamma L_f}{(1-\gamma L_f)}\frac{1}{\lambda_{\min}(D)}\|x^0-\tilde x\|^2\\
&=\frac{1}{2 k}\frac{1+\gamma L_f}{\gamma(1-\gamma L_f)}\|x^0-\tilde x\|^2,
\end{align*}
where the last equality holds considering \eqref{eq:eigenD}.
The claim follows by $z^k = G_\gamma(x^k)$, Theorem~\ref{th:Equiv} and inequality~\eqref{eq:Ineq2}.
\end{proof}

From Theorem~\ref{th:QuadConvergence1} we easily obtain the following optimal
value of $\gamma$:
\begin{equation}\label{eq:OptimalGamma}
\gamma_\star = \argmin_\gamma\ \frac{1+\gamma L_f}{\gamma(1-\gamma L_f)} = \frac{\sqrt{2}-1}{L_f}.
\end{equation}
For this particular value of $\gamma_\star$ the stepsize becomes equal to $\lambda_k=\sqrt{2}-1$.
In the strongly convex case we instead obtain the following stronger result.

\begin{theorem}\label{th:QuadConvergence2}
If $\mu_f>0$ and $\lambda_k=\lambda\in (0,2/(L_{h}+\mu_{h})]$ then
$$\|y^{k}-x_\star\|^2\leq\frac{\lambda_{\max}(D)}{\lambda_{\min}(D)}\left(1-\frac{2\lambda\mu_h L_h}{\mu_h+L_h}\right)^k\|x^0-\tilde x\|^2.$$
\end{theorem}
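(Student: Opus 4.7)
The plan is to mirror the proof of Theorem \ref{th:QuadConvergence1} but replace the $\mathcal{O}(1/k)$ sublinear bound for gradient descent on convex functions with the standard linear contraction estimate for strongly convex ones, and then translate back through the preconditioning $x=Sw$ and through $y^k=P_\gamma(x^k)$.

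First I would verify that $h(w)=F^{\mathrm{DR}}_\gamma(Sw)$ is genuinely strongly convex. Since $\mu_f>0$ and $\gamma<1/L_f$, formula \eqref{eq:mDRE} gives $\mu_{F^{\mathrm{DR}}_\gamma}>0$, and by \eqref{eq:eigenD} we have $\lambda_{\min}(D)>0$, so \eqref{eq:mh} yields $\mu_h>0$. Combined with the Lipschitz gradient bound $L_h=(1+\gamma L_f)/(1-\gamma L_f)$ already recorded, this places us exactly in the setting of gradient descent on a strongly convex function with Lipschitz gradient. For any stepsize $\lambda\in(0,2/(L_h+\mu_h)]$, the classical contraction estimate (e.g.\ \cite[Thm.\ 2.1.15]{nesterov2003introductory}) gives
\begin{equation*}
\|w^{k+1}-\tilde w\|^2\leq \left(1-\tfrac{2\lambda\mu_h L_h}{\mu_h+L_h}\right)\|w^k-\tilde w\|^2,
\end{equation*}
and iterating produces the analogous bound with exponent $k$ on the right-hand side starting from $\|w^0-\tilde w\|^2$.

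Next I would convert back to the original variables. Since $w=S^{-1}x$ with $S=D^{1/2}$, we have $\|w-\tilde w\|^2=\|x-\tilde x\|_{D^{-1}}^2$. Using the same norm-equivalence bounds \eqref{eq:boundsD} exploited in the previous theorem, one gets
\begin{equation*}
\|x^k-\tilde x\|^2\leq \lambda_{\max}(D)\|x^k-\tilde x\|_{D^{-1}}^2 \leq \tfrac{\lambda_{\max}(D)}{\lambda_{\min}(D)}\left(1-\tfrac{2\lambda\mu_h L_h}{\mu_h+L_h}\right)^k\|x^0-\tilde x\|^2.
\end{equation*}

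Finally, to pass from the $x$-iterates to $y^k=P_\gamma(x^k)$, I would invoke nonexpansiveness of the proximal map: since $x_\star=P_\gamma(\tilde x)$ by \cite[Cor. 26.3]{bauschke2011convex}, and $P_\gamma$ is firmly nonexpansive (or simply from \eqref{eq:gradPquad}, whose spectrum lies in $(0,1]$), we have $\|y^k-x_\star\|\leq\|x^k-\tilde x\|$, and squaring gives the claim. I don't anticipate any real obstacle beyond this bookkeeping; the only subtle point is that the admissible stepsize interval $(0,2/(L_h+\mu_h)]$ is consistent with the DRS stepsize range $\lambda_k\in[0,2]$, which can be confirmed by observing that $2/(L_h+\mu_h)\leq 2/L_h<2$ since $L_h>1$.
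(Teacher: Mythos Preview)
Your proposal is correct and follows essentially the same route as the paper's own proof: apply Nesterov's contraction estimate \cite[Th.~2.1.15]{nesterov2003introductory} to the gradient method on $h$, translate the resulting $w$-inequality into a $D^{-1}$-norm inequality in $x$, sandwich with \eqref{eq:boundsD}, and finish via nonexpansiveness of $\prox_{\gamma f}$ together with $x_\star=P_\gamma(\tilde x)$. Your additional remarks (explicit verification that $\mu_h>0$ and the consistency check $2/(L_h+\mu_h)<2$) are helpful but not structurally different from what the paper does.
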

\begin{proof}
Just like in the proof of Theorem~\ref{th:QuadConvergence1},
iteration~\eqref{eq:GMh} is the standard gradient method applied to $h$. If $f$ is strongly
convex then we have, using \eqref{eq:mDRE} and \eqref{eq:mh}, that also $h$ is strongly convex.
From~\cite[Th. 2.1.15]{nesterov2003introductory} we have 
$$\|w^{k}-\tilde w\|^2\leq\left(1-\frac{2\lambda\mu_h L_h}{\mu_h+L_h}\right)^k\|w^0-\tilde w\|^2.$$
Applying the substitution $x = Sw$ we get
$$\|x^{k}-\tilde x\|_{D^{-1}}^2\leq\left(1-\frac{2\lambda\mu_h L_h}{\mu_h+L_h}\right)^k\|x^0-\tilde x\|_{D^{-1}}^2.$$
The thesis follows considering \eqref{eq:boundsD} and that
$$\|y^{k}-x_\star\|^2 = \|\prox_{\gamma f}(x^{k})-\prox_{\gamma f}(\tilde x)\|^2 \leq \|x^{k}-\tilde x\|^2,$$
where the equality holds since $x_\star=\prox_{\gamma f}(\tilde{x})$, and the inequality by nonexpansiveness
of $\prox_{\gamma f}$.
\end{proof}

\section{Fast Douglas-Rachford splitting}\label{sec:Algorithms}

We have shown that DRS is equivalent to the
gradient method minimizing $h(w) = F^{\mathrm{DR}}_\gamma(Sw)$. In
the quadratic case, since for $\gamma < 1/L_f$ we know that $F^{\mathrm{DR}}_\gamma(x)$
is convex, we can as well apply the optimal first order methods
due to Nesterov~\cite{nesterov1983method}, \cite[Sec. 2.2]{nesterov2003introductory}
to the same problem. This way we obtain a \emph{fast Douglas-Rachford splitting}
method. The scheme is as follows: given $u^0=x^0\in\Re^n$, iterate
\begin{subequations}
\begin{align}
y^k&=\prox_{\gamma f}(u^k),\label{eq:FDRSfirst}\\
z^k&=\prox_{\gamma g}(2y^k-u^k),\\
x^{k+1}&=u^k+\lambda_k(z^k-y^k),\\
u^{k+1}&=x^{k+1}+\beta_k(x^{k+1}-x^k).\label{eq:FDRSlast}
\end{align}
\end{subequations}
We have the following estimates regarding the convergence rate of
iterations~\eqref{eq:FDRSfirst}-\eqref{eq:FDRSlast}, whose proofs are
based on~\cite{nesterov2003introductory}.

\begin{theorem}
For convex quadratic $f$, if $\gamma < 1/L_f$, $\lambda_k$ are given by~\eqref{eq:lambdass} and
\begin{align*}
\beta_k &= \begin{cases} 0 & \mbox{if }k=0, \\ \tfrac{k-1}{k+2} & \mbox{if }k>0,\end{cases}
\end{align*}
then the sequence of iterates generated by~\eqref{eq:FDRSfirst}-\eqref{eq:FDRSlast} satisfies
\begin{equation*}
F(z^k)-F_\star \leq \frac{2}{\gamma\lambda(k+2)^2}\|x^0-\tilde x\|^2.
\end{equation*}
\end{theorem}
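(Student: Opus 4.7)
The plan is to mirror the proof of Theorem~\ref{th:QuadConvergence1}, replacing the vanilla gradient method by Nesterov's accelerated scheme applied to the preconditioned envelope $h(w) = F^{\mathrm{DR}}_\gamma(Sw)$. By Theorem~\ref{th:QuadConvex}, $h$ is convex with $L_h$-Lipschitz continuous gradient, which is the only structural ingredient the acceleration analysis needs.

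The first step is to verify that the iterations~\eqref{eq:FDRSfirst}-\eqref{eq:FDRSlast} realize Nesterov's constant-step accelerated method on $h$. Exactly as in the derivation preceding Theorem~\ref{th:QuadConvergence1}, I would pass to the variables $w = S^{-1}x$, $v = S^{-1}u$ and use the identities $Z_\gamma(u) = D\,\nabla F^{\mathrm{DR}}_\gamma(u)$ (combining~\eqref{eq:GradDRE} and~\eqref{eq:ScalingMat}) together with $\nabla h(v) = S\,\nabla F^{\mathrm{DR}}_\gamma(Sv)$. The combined step $x^{k+1} = u^k + \lambda_k(z^k-y^k) = u^k - \lambda Z_\gamma(u^k)$ then becomes $w^{k+1} = v^k - \lambda \nabla h(v^k)$, and~\eqref{eq:FDRSlast} becomes $v^{k+1} = w^{k+1} + \beta_k(w^{k+1}-w^k)$. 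Since the choice $\lambda$ in~\eqref{eq:lambdass} coincides with $1/L_h$ by~\eqref{eq:LDRE} and $\beta_k = (k-1)/(k+2)$ is the standard accelerated momentum sequence, this matches verbatim the accelerated scheme analyzed in~\cite[Sec.~2.2]{nesterov2003introductory}.

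The second step is to invoke Nesterov's classical complexity estimate for that scheme,
\begin{equation*}
h(w^{k+1}) - h(\tilde w) \leq \frac{2 L_h}{(k+2)^2}\|w^0 - \tilde w\|^2,
\end{equation*}
and translate back to the $x$-coordinates using $h = F^{\mathrm{DR}}_\gamma\circ S$ and the norm equivalence~\eqref{eq:boundsD}. With $\lambda_{\min}(D) \geq \gamma$ (from~\eqref{eq:eigenD}, since $\mu_f \geq 0$), $L_h = 1/\lambda$, and $F^{\mathrm{DR}}_\gamma(\tilde x) = F_\star$ from Theorem~\ref{th:Equiv}, this collapses to
\begin{equation*}
F^{\mathrm{DR}}_\gamma(x^{k+1}) - F_\star \leq \frac{2}{\gamma\lambda(k+2)^2}\|x^0 - \tilde x\|^2.
\end{equation*}

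The third step promotes the envelope bound to a bound on $F(z^k)$ through Proposition~\ref{prop:BasicInequalities}, exactly as in the closing line of the proof of Theorem~\ref{th:QuadConvergence1}: inequality~\eqref{eq:Ineq2} gives $F(G_\gamma(x)) \leq F^{\mathrm{DR}}_\gamma(x)$ for $\gamma<1/L_f$. The main obstacle I foresee is purely index bookkeeping: Nesterov's estimate naturally controls the envelope at the post-gradient iterate $x^{k+1}$, whereas $z^k = G_\gamma(u^k)$ is read off from the extrapolated point $u^k$. Matching the stated rate for $F(z^k)$ therefore requires either applying~\eqref{eq:Ineq2} at the iterate from which $z^k$ is formed or invoking the variant of Nesterov's estimate that simultaneously controls the extrapolated sequence; in either case the leading constant $2/(\gamma\lambda(k+2)^2)$ is unchanged, and the claim follows.
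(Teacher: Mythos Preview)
Your proposal follows the paper's proof essentially verbatim: recognize \eqref{eq:FDRSfirst}--\eqref{eq:FDRSlast} as Nesterov's accelerated method on $h(w)=F_\gamma^{\mathrm{DR}}(Sw)$, invoke the standard $O(1/k^2)$ estimate, convert back via \eqref{eq:boundsD} and \eqref{eq:eigenD} (using $L_h=1/\lambda$), and close with \eqref{eq:Ineq2} and Theorem~\ref{th:Equiv}. Regarding the index bookkeeping you flag in your last paragraph, the paper simply asserts $z^k=G_\gamma(x^k)$ and applies \eqref{eq:Ineq2} at $x^k$ without further comment, so your treatment is, if anything, more scrupulous than the original.
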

\begin{proof}
The iterations correspond to the optimal method described in~\cite[Sec. 6.10.2]{bertsekas2009online},
applied to $h$. By \cite[Prop. 6.10.3]{bertsekas2009online} the iterates satisfy
$$ h(w^k)-h(\tilde w)\leq\frac{2L_h}{(k+2)^2}\|w^0-\tilde w\|^2. $$
Switching to the variable $x = Sw$ we get
\begin{align*}
F_\gamma^{\mathrm{DR}}(x^k)-F_\gamma^{\mathrm{DR}}(\tilde x) &\leq \frac{2L_h}{(k+2)^2}\|x^0-\tilde x\|_{D^{-1}}^2\\
&\leq \frac{1}{\lambda_{\min}(D)}\frac{2L_h}{(k+2)^2}\|x^0-\tilde x\|^2\\
&= \frac{\lambda_{\max}(D)}{\lambda_{\min}(D)}\frac{2L_{F_{\gamma}^{\mathrm{DR}}}}{(k+2)^2}\|x^0-\tilde x\|^2\\
&= \frac{1+\gamma L_f}{\gamma(1-\gamma L_f)}\frac{2}{(k+2)^2}\|x^0-\tilde x\|^2.
\end{align*}
Since $z^k = G_\gamma(x^k)$, the result follows by invoking inequality~\eqref{eq:Ineq2} and Theorem~\ref{th:Equiv}.
\end{proof}

The optimal choice for $\gamma$ is again $\gamma_\star=(\sqrt{2}-1)/L_f$.
We similarly obtain complexity bounds for the strongly convex case, as described
in the following result.

\begin{theorem}
If $f$ is strongly convex quadratic, $\gamma < 1/L_f$, $\lambda_k$ are given by~\eqref{eq:lambdass} and
\begin{align*}
\beta_k &= \frac{1-\sqrt{\mu_h/L_h}}{1+\sqrt{\mu_h/L_h}},
\end{align*}
then the sequence of iterates generated by~\eqref{eq:FDRSfirst}-\eqref{eq:FDRSlast} satisfies
\begin{align*}
F(z^k)-F_\star &\leq \frac{L_h}{\lambda_{\min (D)}} \left(1-\sqrt{\frac{\mu_h}{L_h}}\right)^k\|x^0-x_\star\|^2.
\end{align*}
\end{theorem}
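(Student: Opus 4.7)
The plan is to mirror exactly the proof of the preceding Theorem, only replacing the basic convex convergence rate of Nesterov's accelerated gradient method with its strongly convex counterpart. First I would observe that, as in the non-accelerated case, the iterations \eqref{eq:FDRSfirst}--\eqref{eq:FDRSlast} are the image under the change of variables $x = Sw$ of the standard Nesterov accelerated gradient scheme with constant momentum parameter $\beta = (1-\sqrt{\mu_h/L_h})/(1+\sqrt{\mu_h/L_h})$ and step $\lambda = 1/L_h$ applied to $h(w) = F^{\mathrm{DR}}_\gamma(Sw)$. Indeed, this is exactly the identification we already used for Theorem~\ref{th:QuadConvergence2}: in the quadratic case the scaling matrix $D$ is constant, so the $u$-iterate simply expresses the accelerated extrapolation step in the $x$-coordinates, and conjugating by $S$ turns scaled gradient steps into unscaled ones on $h$.

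Next I would invoke Theorem~2.2.3 of \cite{nesterov2003introductory} (the convergence estimate for the constant-scheme optimal method in the strongly convex case): since $h$ is $L_h$-smooth and $\mu_h$-strongly convex by Theorem~\ref{th:QuadConvex} together with \eqref{eq:mh}, the iterates of the accelerated method satisfy an estimate of the form
\begin{equation*}
h(w^k) - h(\tilde w) \leq L_h\left(1-\sqrt{\tfrac{\mu_h}{L_h}}\right)^k \|w^0-\tilde w\|^2.
\end{equation*}
Then I would translate this back to the $x$ variables using $w = S^{-1}x$, $\tilde w = S^{-1}\tilde x$, and the norm-equivalence bounds \eqref{eq:boundsD}, obtaining
\begin{equation*}
F^{\mathrm{DR}}_\gamma(x^k) - F^{\mathrm{DR}}_\gamma(\tilde x) \leq \frac{L_h}{\lambda_{\min}(D)}\left(1-\sqrt{\tfrac{\mu_h}{L_h}}\right)^k \|x^0-\tilde x\|^2.
\end{equation*}

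Finally, exactly as in the previous two theorems, I would combine $z^k = G_\gamma(x^k)$, the lower bound \eqref{eq:Ineq2} (which, for $\gamma<1/L_f$, gives $F(z^k) \leq F^{\mathrm{DR}}_\gamma(x^k)$), and the equivalence $\inf F = \inf F^{\mathrm{DR}}_\gamma$ from Theorem~\ref{th:Equiv} to pass from the $F^{\mathrm{DR}}_\gamma$-gap to the $F$-gap, yielding the announced estimate (with $\|x^0 - x_\star\|^2$ understood as $\|x^0-\tilde x\|^2$ for some $\tilde x \in \tilde X$ such that $x_\star = P_\gamma(\tilde x)$, in accordance with the parameterization used throughout Section~\ref{sec:Quadratic}).

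The only non-routine step is pinning down the precise form of the Nesterov bound in the strongly convex case so that it matches the stated constant $L_h/\lambda_{\min}(D)$; depending on which variant of Theorem~2.2 in \cite{nesterov2003introductory} one cites, the bound may naturally appear with $h(w^0)-h(\tilde w)+\tfrac{\mu_h}{2}\|w^0-\tilde w\|^2$ in place of $L_h\|w^0-\tilde w\|^2$, which one then upper bounds using $L_h$-smoothness and $\nabla h(\tilde w)=0$. Everything else is a straightforward transcription of the argument already used for Theorems~\ref{th:QuadConvergence1} and \ref{th:QuadConvergence2}.
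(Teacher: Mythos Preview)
Your proposal is correct and follows essentially the same route as the paper: identify the iterations with Nesterov's constant-momentum scheme on $h$, invoke \cite[Th.~2.2.3]{nesterov2003introductory} to get $h(w^k)-h(\tilde w)\leq L_h(1-\sqrt{\mu_h/L_h})^k\|w^0-\tilde w\|^2$, change variables back via \eqref{eq:boundsD}, and finish with $z^k=G_\gamma(x^k)$, \eqref{eq:Ineq2} and Theorem~\ref{th:Equiv}. Your observation that the stated $\|x^0-x_\star\|^2$ should be read as $\|x^0-\tilde x\|^2$ is also apt, since the paper's own proof in fact arrives at the latter.
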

\begin{proof}
The proof proceeds similarly to the previous one. The algorithm corresponds
to iterations \cite[Eq. 2.2.9]{nesterov2003introductory} applied to $h$, and
\cite[Th. 2.2.3]{nesterov2003introductory} tells us that
$$ h(w^k)-h(\tilde w) \leq L_h\left(1-\sqrt{\frac{\mu_h}{L_h}}\right)^k\|w^0-\tilde w\|^2.$$
The latter is equivalent to
\begin{align*}
F_\gamma^{\mathrm{DR}}(x^k)-F_\gamma^{\mathrm{DR}}(\tilde x) &\leq L_h\left(1-\sqrt{\frac{\mu_h}{L_h}}\right)^k\|x^0-\tilde x\|_{D^{-1}}^2 \\
    &\leq \frac{L_h}{\lambda_{\min}(D)}\left(1-\sqrt{\frac{\mu_h}{L_h}}\right)^k\|x^0-\tilde x\|^2.
\end{align*}
Again, $z^k = G_\gamma(x^k)$, Theorem~\ref{th:Equiv} and inequality~\eqref{eq:Ineq2}
 complete the result.
\end{proof}

\section{Simulations}\label{sec:Simulations}

\subsection{Box-constrained QP}

We tested our analysis against numerical results obtained by
applying the considered methods to the following box-constrained convex quadratic
program
\begin{align*}
\minimize\ &\ \tfrac{1}{2}x'Qx + q'x\\
\stt\ &\ l\leq x \leq u,
\end{align*}
where $Q\in\Re^{n\times n}$ is symmetric and positive semidefinite, while
$q,l,u\in\Re^n$.
The problem is expressed in composite form by setting
$$f(x) = \tfrac{1}{2}x'Qx + q'x,\quad g(x) = \delta_{[l,u]}(x),$$
where $\delta_C$ is the indicator function of the convex set $C$.
As it was pointed out in Section~\ref{sec:Quadratic},
the proximal mapping associated with $f$ is linear
$$ \prox_{\gamma f}(x) = (I+\gamma Q)^{-1}(x-\gamma q). $$
The proximal mapping associated with $g$ is simply the projection
onto the $[l,u]$ box, $\prox_{\gamma g}(x) = \Pi_{[l,u](x)}$.
Tests were performed on problems generated randomly as described
in~\cite{gonzaga2013optimal}. In Figure~\ref{fig:BoxQP_gamma} we illustrate
the performance of DRS for different choices of the parameter $\gamma$. Figure~\ref{fig:BoxQP_accel}
compares the standard DRS and the accelerated method~\eqref{eq:FDRSfirst}-\eqref{eq:FDRSlast}.

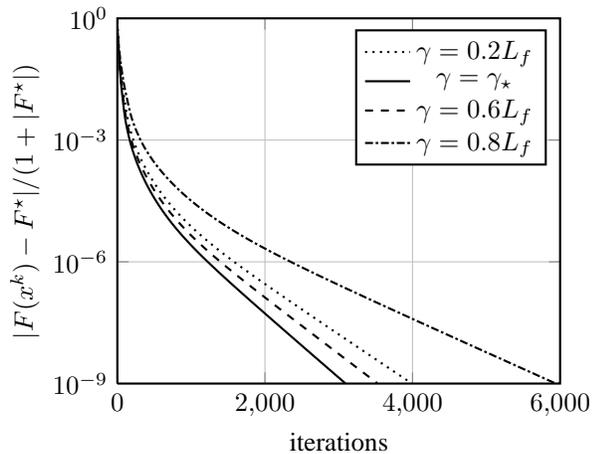
\begin{figure}[tb]
\centering
\begin{tikzpicture}\begin{semilogyaxis}[width=0.42\textwidth,legend pos=north east,xlabel={iterations},ylabel={$\vert F(x^k)-F^\star \vert/(1+\vert F^\star\vert)$},grid=major,thick,xmin=0,xmax=6000,ymin=1e-9,ymax=1e0]
    \addplot[black,dotted] table[x index=0,y index=1,header=false] {figures/QPbox_gamma_0_2.dat};
    \addplot[black] table[x index=0,y index=1,header=false] {figures/QPbox_gamma_star.dat};
    \addplot[black,dashed] table[x index=0,y index=1,header=false] {figures/QPbox_gamma_0_6.dat};
    \addplot[black,dash pattern=on 3pt off 1pt on 1pt off 1pt] table[x index=0,y index=1,header=false] {figures/QPbox_gamma_0_8.dat};
    \legend{$\gamma = 0.2 L_f$,$\gamma = \gamma_\star$,$\gamma = 0.6 L_f$,$\gamma = 0.8 L_f$}
\end{semilogyaxis}\end{tikzpicture}
\caption{DRS applied to a randomly generated box-constrained QP,
with $n=500$, for different values of $\gamma$.}
\label{fig:BoxQP_gamma}
\end{figure}

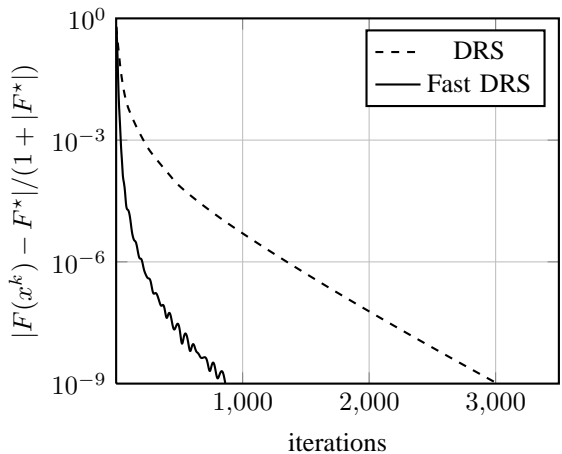
\begin{figure}[tb]
\centering
\begin{tikzpicture}\begin{semilogyaxis}[width=0.42\textwidth,legend pos=north east,xlabel={iterations},ylabel={$\vert F(x^k)-F^\star \vert/(1+\vert F^\star\vert)$},grid=major,thick,xmin=0,xmax=3500,ymin=1e-9,ymax=1e0,xtick={1000,2000,3000}]
    \addplot[black,dashed] table[x index=0,y index=1,header=false] {figures/QPbox_unaccel.dat};
    \addplot[black] table[x index=0,y index=1,header=false] {figures/QPbox_accel.dat};
    \legend{DRS, Fast DRS}
\end{semilogyaxis}\end{tikzpicture}
\caption{Comparison between DRS and its accelerated variant,
for $\gamma = \gamma_\star$, applied to a randomly generated box-constrained QP with $n=500$.}
\label{fig:BoxQP_accel}
\end{figure}

\subsection{Sparse least squares}

The well known $\ell_1$-regularized least squares problem consists of finding 
a sparse solution to an underdetermined linear system. The goal is achieved by
solving
\begin{align*}
\minimize\ &\ \tfrac{1}{2}\|Ax-b\|_2^2 + \rho\|x\|_1,
\end{align*}
where $A\in\Re^{m\times n}$ and $b\in\Re^m$.
The regularization parameter $\rho$ modulates between a low residual
$\|Ax-b\|_2^2$ and a sparse solution.
In this case the proximal mapping with respect to $f$ is
$$ \prox_{\gamma f}(x) = (A'A+\gamma^{-1}I)^{-1}(A'b+\gamma^{-1}x), $$
while $\prox_{\gamma g}$ is the following \emph{soft-thresholding} operator,
$$ \left[\prox_{\gamma g}(x)\right]_i = \sign(x_i)\cdot\max\{0,|x_i|-\gamma\rho\},\ i=1,\ldots n. $$
Random problems were generated according to~\cite{lorenz2013constructing}, and the
results are shown in Figure~\ref{fig:L1LS_gamma} and~\ref{fig:L1LS_accel},
where we compare different choices for $\gamma$ and the fast Douglas-Rachford
iterations.

\begin{figure}[tb]
\centering
\begin{tikzpicture}\begin{semilogyaxis}[width=0.41\textwidth,legend pos=north east,xlabel={iterations},ylabel={$\vert F(x^k)-F^\star \vert/(1+\vert F^\star\vert)$},grid=major,thick,xmin=0,xmax=3000,ymin=1e-9,ymax=1e0]
    \addplot[black,dotted] table[x index=0,y index=1,header=false] {figures/L1LS_gamma_0_2.dat};
    \addplot[black] table[x index=0,y index=1,header=false] {figures/L1LS_gamma_star.dat};
    \addplot[black,dashed] table[x index=0,y index=1,header=false] {figures/L1LS_gamma_0_6.dat};
    \addplot[black,dash pattern=on 3pt off 1pt on 1pt off 1pt] table[x index=0,y index=1,header=false] {figures/L1LS_gamma_0_8.dat};
    \legend{$\gamma = 0.2 L_f$,$\gamma = \gamma_\star$,$\gamma = 0.6 L_f$,$\gamma = 0.8 L_f$}
\end{semilogyaxis}\end{tikzpicture}
\caption{Comparison of different choices of $\gamma$ for a random $\ell_1$ least
squares problem, with $m=100, n=1000$.}
\label{fig:L1LS_gamma}
\end{figure}
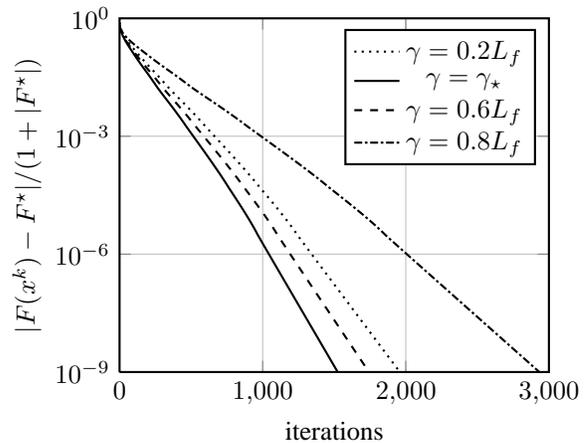

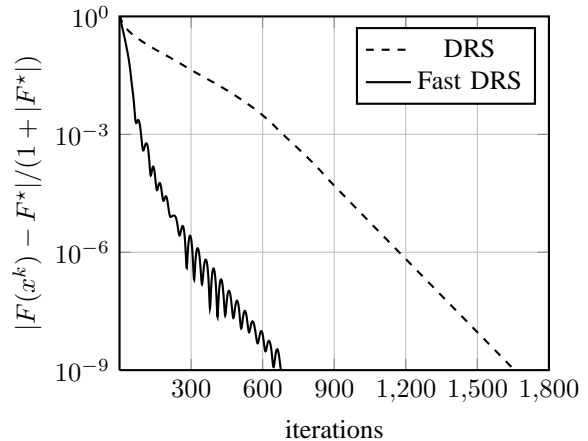
\begin{figure}[tb]
\centering
\begin{tikzpicture}\begin{semilogyaxis}[width=0.41\textwidth,legend pos=north east,xlabel={iterations},ylabel={$\vert F(x^k)-F^\star \vert/(1+\vert F^\star\vert)$},grid=major,thick,xmin=0,xmax=1800,ymin=1e-9,ymax=1e0,xtick={300,600,900,1200,1500,1800}]]
    \addplot[black,dashed] table[x index=0,y index=1,header=false] {figures/L1LS_unaccel.dat};
    \addplot[black] table[x index=0,y index=1,header=false] {figures/L1LS_accel.dat};
    \legend{DRS, Fast DRS}
\end{semilogyaxis}\end{tikzpicture}
\caption{DRS and its accelerated variant, with $\gamma = \gamma_\star$, applied to a
random sparse least squares problem of size $m=100, n=1000$.}
\label{fig:L1LS_accel}
\end{figure}


\section{Conclusions \& Future Work}\label{sec:Conclusions}



In this paper we dealt with convex composite minimization problems.
We introduced a continuously differentiable function, namely
the Douglas-Rachford Envelope (DRE). Its minimizers, under suitable assumptions,
are in a one-to-one correspondence with the solutions of the original convex
composite optimization problem. We observed how the 
DRS iterations, for finding zeros of the sum of two maximal monotone
operators $A$ and $B$, are equivalent to a scaled unconstrained gradient method
applied to the DRE, when $A = \partial f$ and $B =\partial g$ and $f$ is twice
continuously differentiable with Lipschitz continuous gradient.
This allowed us to to apply well-known results of smooth unconstrained optimization
to analyze the convergence of DRS in the particular case of $f$ being convex quadratic.
Moreover, we have been able to apply and
analyze optimal first-order methods and obtain a fast Douglas-Rachford
splitting method.
Ongoing work on this topic include exploiting the illustrated results to study
convergence properties of ADMM.

\bibliographystyle{IEEEtran}
\bibliography{DRE_CDC}

\appendix

\section{}

We provide here all the proofs and technical lemmas omitted in the article.

\emph{Proof of Proposition~\ref{prop:BasicInequalities}:}
First we will need the following lemma.
\begin{lemma}\label{le:BasicLemma}
Suppose that $h:\Re^n\to\Rinf$ is proper, closed, convex. Then for all $y\in\Re^n$, $z\in\Re^n$
\begin{align*}
h(z)+\tfrac{1}{2\gamma}\|z-y\|^2 &\geq h(\prox_{\gamma h}(y))+\tfrac{1}{2\gamma}\|\prox_{\gamma h}(y)-y\|^2 \\
    &\phantom{\geq} +\tfrac{1}{2\gamma}\|z-\prox_{\gamma h}(y)\|^2.
\end{align*}
\end{lemma}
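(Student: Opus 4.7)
The key observation is that, for fixed $y$, the function
\[
\phi(z) \eqdef h(z) + \tfrac{1}{2\gamma}\|z-y\|^2
\]
is exactly the objective whose minimizer defines $\prox_{\gamma h}(y)$. Since $h$ is proper, closed and convex while the quadratic $\tfrac{1}{2\gamma}\|\cdot-y\|^2$ is $(1/\gamma)$-strongly convex, $\phi$ is proper, closed and $(1/\gamma)$-strongly convex. Hence it admits a unique minimizer, and by construction this minimizer is $p \eqdef \prox_{\gamma h}(y)$.

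The plan is to invoke the standard quadratic growth property of strongly convex functions at their minimizer: for any $(1/\gamma)$-strongly convex function with minimizer $p$,
\[
\phi(z) \geq \phi(p) + \tfrac{1}{2\gamma}\|z-p\|^2 \qquad \forall\, z\in\Re^n.
\]
Substituting the definition of $\phi$ on both sides yields the claimed three-point inequality immediately, with no rearrangement needed.

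Should one prefer to avoid quoting the strong-convexity growth bound, an equivalent route is to start from the first-order optimality condition $(y-p)/\gamma \in \partial h(p)$, which gives via the subgradient inequality
\[
h(z) \geq h(p) + \tfrac{1}{\gamma}\langle y-p,\, z-p\rangle.
\]
Combining this with the elementary identity
\[
\|z-y\|^2 = \|z-p\|^2 + 2\langle z-p,\, p-y\rangle + \|p-y\|^2
\]
and rearranging reproduces the same inequality. I do not foresee a genuine obstacle: the only subtlety is justifying that $\phi$ inherits strong convexity with modulus $1/\gamma$, which follows at once since adding a convex function to a $(1/\gamma)$-strongly convex quadratic preserves the modulus.
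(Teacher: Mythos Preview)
Your proposal is correct and follows essentially the same approach as the paper: both arguments exploit that $h(\cdot)+\tfrac{1}{2\gamma}\|\cdot-y\|^2$ is $(1/\gamma)$-strongly convex with minimizer $p=\prox_{\gamma h}(y)$, and both use the optimality condition $\gamma^{-1}(y-p)\in\partial h(p)$ to eliminate the linear term. The paper writes out the strong-convexity subgradient inequality at $p$ and then specializes the subgradient to $v=\gamma^{-1}(y-p)$, which is exactly your alternative route; your first route simply packages the same two steps as the quadratic-growth bound at the minimizer.
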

\begin{proof}
Let us denote, for brevity, $y_\gamma = \prox_{\gamma h}(y)$.
Function $\phi(z) = \tfrac{1}{2\gamma}\|z-y\|^2$ is strongly convex with modulus
$\gamma^{-1}$. For any $v\in\partial h(y_\gamma)$ we have, by strong convexity of $h(z)+\phi(z)$,
\begin{align*}
h(z)+\phi(z) &= h(z)+\tfrac{1}{2\gamma}\|z-y\|^2 \\
    &\geq h(y_\gamma) + \tfrac{1}{2\gamma}\|y_\gamma-y\|^2 \\
    &\phantom{\geq} + (v + \tfrac{1}{\gamma}(y_\gamma-y))'(z-y_\gamma)\\
    &\phantom{\geq} + \tfrac{1}{2\gamma}\|z-y_\gamma\|^2.
\end{align*}
The result follows by considering $v=\tfrac{1}{\gamma}(y-y_\gamma)$,
which is an element of $\partial h(y_\gamma)$ by the optimality condition for $\prox_{\gamma h}(y)$.
\end{proof}
Now we can proceed with the proof of Proposition~\ref{prop:BasicInequalities}. Due to~\eqref{eq:DRE3}, an alternative expression for the DRE is the following
\begin{align}\label{eq:DRE4}
F^{\mathrm{DR}}_\gamma(x) &= f(P_\gamma(x))+g(G_\gamma(x))+\tfrac{1}{2\gamma}\|G_\gamma(x)-P_\gamma(x)\|^2\nonumber\\
    &\phantom{=} +\gamma^{-1}(G_\gamma(x)-P_\gamma(x))'(x-P_\gamma(x)).
\end{align}
In order to obtain~\eqref{eq:Ineq1}, apply Lemma~\ref{le:BasicLemma}
for $h=g$, $y=2P_\gamma(x)-x$. We have that for all $z\in\Re^n$
\begin{align*}
g(z)&+\tfrac{1}{2\gamma}\|z-(2P_\gamma(x)-x)\|^2 \\
& \geq g(G_\gamma(x))+\tfrac{1}{2\gamma}\|G_\gamma(x)-(2P_\gamma(x)-x)\|^2\\
&+\tfrac{1}{2\gamma}\|z-G_\gamma(x)\|^2.
\end{align*}
Putting $z=P_\gamma(x)$ in the above,
\begin{align*}
g(P_\gamma(x))  & +\tfrac{1}{2\gamma}\|x-P_\gamma(x)\|^2 \\
        & \geq g(G_\gamma(x))+\tfrac{1}{2\gamma}\|G_\gamma(x)-P_\gamma(x)+x-P_\gamma(x)\|^2\\
        & \phantom{\geq} +\tfrac{1}{2\gamma}\|P_\gamma(x)-G_\gamma(x)\|^2\\
        & = g(G_\gamma(x))+\tfrac{1}{2\gamma}\|G_\gamma(x)-P_\gamma(x)\|^2\\
        & \phantom{=} +\tfrac{1}{2\gamma}\|x-P_\gamma(x)\|^2\\
        & \phantom{=} +\gamma^{-1}(G_\gamma(x)-P_\gamma(x))'(x-P_\gamma(x))\\
        & \phantom{=} +\tfrac{1}{2\gamma}\|P_\gamma(x)-G_\gamma(x)\|^2.
\end{align*}
Therefore,
\begin{align*}
g(P_\gamma(x)) & \geq g(G_\gamma(x))+\tfrac{1}{2\gamma}\|G_\gamma(x)-P_\gamma(x)\|^2\\
        & \phantom{\geq} +\gamma^{-1}(G_\gamma(x)-P_\gamma(x))'(x-P_\gamma(x))\\
        & \phantom{\geq} +\tfrac{1}{2\gamma}\|P_\gamma(x)-G_\gamma(x)\|^2.
\end{align*}
Adding $f(P_\gamma(x))$ to both sides,
\begin{align*}
F(P_\gamma(x)) & \geq f(P_\gamma(x))+g(G_\gamma(x))+\tfrac{1}{2\gamma}\|G_\gamma(x)-P_\gamma(x)\|^2\nonumber\\
        & \phantom{\geq} +\gamma^{-1}(G_\gamma(x)-P_\gamma(x))'(x-P_\gamma(x))\\
        & \phantom{\geq} +\tfrac{1}{2\gamma}\|P_\gamma(x)-G_\gamma(x)\|^2.
\end{align*}
We obtain the result by recalling~\eqref{eq:DRE4}.
Inequality~\eqref{eq:Ineq2} is obtained as follows,
\begin{align*}
F(G_\gamma(x))&= f(G_\gamma(x))+g(G_\gamma(x))\\
    &\leq f(P_\gamma(x))+g(G_\gamma(x))\\
    &\phantom{\leq}+\nabla f(P_\gamma(x))'(G_\gamma(x)-P_\gamma(x))\\
    &\phantom{\leq}+\tfrac{L_f}{2}\|G_\gamma(x)-P_\gamma(x)\|^2\\
    &=f(P_\gamma(x))+g(G_\gamma(x))\\
    &\phantom{=} +\gamma^{-1}(G_\gamma(x)-P_\gamma(x))'(x-P_\gamma(x))\\
    &\phantom{=} +\tfrac{L_f}{2}\|G_\gamma(x)-P_\gamma(x)\|^2\\
    &=F^{\mathrm{DR}}_\gamma(x)-\tfrac{1-\gamma L_f}{2\gamma}\|G_\gamma(x)-P_\gamma(x)\|^2,
\end{align*}
where the first inequality follows from the Lipschitz continuity of $\nabla f$
and the last equality from~\eqref{eq:DRE4}.
\hfill$\blacksquare$

The next basic result is used in the proof of Theorem~\ref{th:QuadConvex}.
\begin{lemma}\label{le:NonexpZ}
Mapping $Z_\gamma:\Re^n\to\Re^n$ is nonexpansive.
\end{lemma}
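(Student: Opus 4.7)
The plan is to recognize $Z_\gamma$ as (one half of) the complement of the classical Douglas--Rachford operator and then invoke standard firm-nonexpansiveness results from convex analysis. Let $R_f := 2P_\gamma - I$ and $R_g := 2\prox_{\gamma g} - I$ denote the reflected proximal mappings of $\gamma f$ and $\gamma g$. A direct substitution in the definitions of $P_\gamma, G_\gamma$ gives
\[
R_g(R_f(x)) \;=\; 2\prox_{\gamma g}(2P_\gamma(x)-x) - (2P_\gamma(x)-x) \;=\; x - 2\bigl(P_\gamma(x) - G_\gamma(x)\bigr),
\]
so that $Z_\gamma = \tfrac{1}{2}(I - R_g R_f)$.

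Since $f$ and $g$ are proper, closed and convex, both $P_\gamma$ and $\prox_{\gamma g}$ are firmly nonexpansive \cite[Prop.~12.27]{bauschke2011convex}, whence $R_f$ and $R_g$ are nonexpansive and so is their composition $R_g R_f$. Consequently the Douglas--Rachford operator $T := \tfrac{1}{2}(I + R_g R_f)$ is firmly nonexpansive as the $\tfrac12$-average of the identity and a nonexpansive map. Using the textbook fact that $T$ is firmly nonexpansive if and only if $I - T$ is firmly nonexpansive \cite[Prop.~4.2]{bauschke2011convex} — or, equivalently, that $\tfrac{1}{2}(I - N)$ is firmly nonexpansive whenever $N$ is nonexpansive — we conclude that $Z_\gamma = I - T$ is firmly nonexpansive, hence in particular $\|Z_\gamma(x_1) - Z_\gamma(x_2)\| \le \|x_1 - x_2\|$ for all $x_1,x_2\in\Re^n$.

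The argument is essentially algebraic bookkeeping: the only substantive ingredients are the firm nonexpansiveness of proximal operators of proper closed convex functions and its stability under the involution $T \mapsto I - T$. No real obstacle is expected; the main thing to be careful about is keeping the factor of $\tfrac12$ correct when identifying $Z_\gamma$ with the residual $I - T$ of the Douglas--Rachford fixed-point operator.
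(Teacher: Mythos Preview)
Your proof is correct and follows essentially the same route as the paper: both identify $Z_\gamma = \tfrac{1}{2}(I - R_g R_f)$ with $R_g R_f$ nonexpansive (as a composition of reflected resolvents), and deduce the result from there. The only difference is cosmetic---the paper finishes with a direct reverse-triangle-inequality argument to get nonexpansiveness, whereas you cite the standard equivalence ``$T$ firmly nonexpansive $\Leftrightarrow$ $I-T$ firmly nonexpansive'' and thereby obtain the (slightly stronger, though not needed here) conclusion that $Z_\gamma$ is firmly nonexpansive.
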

\begin{proof}
We can express $Z_\gamma$ as
$$ Z_\gamma(x) = \tfrac{1}{2}(x-T(x)),$$
where $T=R_{\gamma\partial g}\circ R_{\gamma\partial f}$ and $R_{\gamma\partial f}$,
$R_{\gamma\partial g}$ are called \emph{reflected resolvent}~\cite[Chap. 23]{bauschke2011convex}
of $\partial f$ and $\partial g$, respectively. Reflected resolvents of maximal
monotone mappings (such as the subdifferential of a convex function) are known
to be nonexpansive~\cite[Cor. 23.10]{bauschke2011convex}, and so is their composition $T$.
Then we have
$$\|T(x_1)-T(x_2)\|\leq \|x_1-x_2\|,$$
for all $x_1,x_2\in\Re^n$, or
$$\|-2(Z_\gamma(x_1)-Z_\gamma(x_2))+(x_1-x_2)\|\leq\|x_1-x_2\|.$$
Using the reverse triangle inequality
$$2\|Z_\gamma(x_1)-Z_\gamma(x_2)\|-\|x_1-x_2\|\leq\|x_1-x_2\|,$$
or
$$\|Z_\gamma(x_1)-Z_\gamma(x_2)\|\leq \|x_1-x_2\|,$$
\ie $Z_\gamma$ is nonexpansive.
\end{proof}

\end{document}